\documentclass[11pt]{amsart}

\usepackage{amsmath,amssymb,amsthm}
\usepackage{geometry}
\newtheorem{lemma}{Lemma}
\newtheorem{claim}{Claim}

\newtheorem{proposition}{Proposition}
\newtheorem*{mainthm}{Main Theorem}
\newtheorem*{thme}{Theorem 2}

\begin{document}
	\author{Ning Yu}
	\address{Department of Mathematics, Northwestern University, 2033 Sheridan Road, Evanston, IL 60208, USA}
	\title{The Porous Medium Equation and Convexity}
	\maketitle
	
	\begin{abstract}
		We show that root convexity of the pressure is preserved by the porous medium equation if the initial data is root convex, positive, smooth, bounded below by a quadratic function and has bounded Laplacian. This complements a result of Daskalopoulos-Hamilton-Lee on root concavity of pressure.
	\end{abstract}
	\section{Introduction}
	The porous medium equation (PME) is a model for gas diﬀusing in a
	porous medium. The gas density $u(x,t) \ge 0$ satisfies the equation
	\[
	u_t = \Delta (u^m)  \quad \text{on} \quad \mathbb{R}^d \times [0,\infty),
	\tag{1.1}
	\]
	where $m>1$. The pressure $v$ is defined by $v = mu^{m-1}$. The equation satisfied by $v$ is 
	\[
	v_t = v\Delta v + (1/(m-1))|\nabla v|^2 \quad \text{on} \quad \mathbb{R}^d \times [0,\infty).
	\tag{1.2}
	\]
	An important weak solution to $(1.1)$ is the Barenblatt solution \cite{barenblatt1952unsteady}:
	\[
	U(x,t):= t^{-\alpha}\left(\left(C-\frac{\alpha(m-1)}{2md}t^{-2\alpha/d}|x|^2\right)_{+}\right)^{1/(m-1)}.
	\]
	where $\alpha = d/(d(m-1)+2)$ and $C>0$. 
	Then $V = mU^{m-1}$ is quadratic in $x$ in the set $\{V>0\}$ for every $t>0$. Obviously the concavity of $V$ is preserved on its support for all time.
	On the other hand, the solution starting at the quadratic function
	\[
	v_0 = a|x-b|^2+c \quad (a>0, c\ge 0, \,\, \text{and $b \in \mathbb{R}^d$ is arbitrary})
	\]
	is given by
	\[
	v(x,t)=	\frac{a|x-b|^2}{1 - a(2d+4/(m-1))t} + c \left(1 - a(2d+4/(m-1))t\right)^{-2d/(2d+4/(m-1))},
	\]
	which will also be quadratic, therefore convex, as long as the solution exists. However the preservation of concavity or convexity is not always true. When $d = 1$, the preservation of concavity was proved in \cite{BenilanVazquez1987} by Bénilan-Vázquez.
	In \cite{DaskalopoulosHamiltonLee}, 
	Daskalopoulos-Hamilton-Lee proved that if the initial data $v_0 = v(x,0)$ is root concave (in the sense that $D^2(\sqrt{v_0}) \le 0$) and smooth up to boundary, and satisfies the nondegeneracy condition $v_0+|Dv_0|^2\ge c>0$, then $v(\cdot,t)$ is root concave up to boundary for all $0\le t < \infty$. This implies that the free boundary is smooth for all time (\cite{Daskalopoulos1998Regularity}, \cite{DaskalopoulosHamiltonLee}). Ishige-Salani proved in \cite{ishige2010convexity} that the convexity of free boundary breaks in short time.   Chau-Weinkove proved in \cite{chau2024nonpreservation} that when $d = 2$, $\alpha$-concavity ($v^{\alpha}$ is concave) is not preserved by the pressure for $0\le\alpha<1/2$ and $1/2<\alpha\le 1$ (0-concavity means that $\log v$ is concave) and Shen-Talla \cite{shenTalla2026} proved this for $d \ge 3$.
	
	Concavity or convexity properties of solutions to other parabolic problems have been considered also in [6,8,10,11,12,16,18].

	It is natural to ask whether a classical solution exists and remains root convex if the initial data is smooth and root convex. The existence and uniqueness of solution of $(1.1)$ starting with growing initial data with certain growth conditions were established in \cite{benilan_crandall_porous} by Bénilan-Crandall-Pierre. They proved that an optimal condition on the initial data that guarantees the existence of a $L^1_{\text{loc}}$ weak solution of $(1.1)$ is 
	\[
	\|u_0\|_r
	:= \sup_{R>r} \frac{1}{R^{\gamma}} \int_{\{|x|\le R\}} u_0(x)\,dx < \infty,
	\tag{1.3}
	\]
	where $\gamma = d + 2/(m-1)$. The necessity of this condition was proved in \cite{AronsonCaffarelli1983} by Aronson-Caffarelli.
	If $u_0$ satisfies $(1.3)$, a solution $u(x,t)$ of (1.1) exists for
	\[
	t \in [0,T), \qquad T(u_0) = \frac{c_1}{\lim_{r \rightarrow \infty}\|u_0\|^{m-1}_r},
	\]
	where $c_1$ is a constant depending only on $m$ and $d$. Observe that $\|u_0\|_r$ decreases in $r$ so the limit exists. We will state this result in detail in Section 2. We prove the following theorem:
	\begin{mainthm}
		Suppose that the density $u_0 := u(x,0)$ satisfies $(1.3)$, and the pressure $v_0$ is positive, smooth, root convex ($D^2(\sqrt{v_0}) \ge 0$), and satisfies
		\[
		K:=\sup_{x\in \mathbb{R}^d}\Delta v_0 < \infty \quad \text{and} \quad v_0>c(|x|^2+1)
		\]
		for some $c>0$. Then there exists a smooth classical solution $v(x,t)$ that is root convex for $t \in [0, T)$ where $T = \min \{(m-1)/((m+1)K), T(u_0)\}$.
	\end{mainthm}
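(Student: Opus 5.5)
Set $w=\sqrt{v}$, so that $v=w^2$. The plan has three stages: derive the evolution equation for $w$, get the existence of a smooth positive solution with controlled growth from the Bénilan--Crandall--Pierre theory, and run a maximum-principle argument on $D^2 w$.

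\textbf{The equation for $w$.} Substituting $v=w^2$ into (1.2) gives $2ww_t = w^2(2w\Delta w+2|\nabla w|^2) + \frac{4}{m-1}w^2|\nabla w|^2$, that is,
\[
w_t = w^2\Delta w + \Big(1+\tfrac{2}{m-1}\Big) w|\nabla w|^2 = w^2\Delta w + \tfrac{m+1}{m-1}\, w|\nabla w|^2 .
\]
Fix a unit vector $e$ and differentiate twice in the direction $e$. Writing $w_{ee}$ for the second derivative, this yields
\[
\partial_t w_{ee} = w^2 \Delta w_{ee} + (\text{first order terms in } \nabla w_{ee}) + Q,
\]
where $Q$ collects terms such as $2w\,w_{ee}\Delta w$, $2w_e^2\Delta w$, $4w\,w_e\Delta w_e$, $2\tfrac{m+1}{m-1}(w|\nabla w_e|^2 + w_{ee}|\nabla w|^2+2w_e\nabla w\cdot\nabla w_e)$, and similar. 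At a point and direction where the smallest eigenvalue of $D^2w$ first reaches $0$, we have $w_{ee}=0$, $\nabla w_{ee}=0$ and $D^2 w\ge 0$. The terms with a factor $w_{ee}$ then vanish. The remaining ones, $2w_e^2\Delta w + 2\tfrac{m+1}{m-1} w|\nabla w_e|^2$ plus the cross terms in $w_e$, should be checked to be nonnegative by completing a square. This uses $\Delta w\ge 0$ and the fact that in an eigenbasis $\nabla w_e = D^2w\,e = 0$ when $e$ is the null eigenvector. In fact $\nabla w_e=0$ kills most cross terms, leaving $2w_e^2\Delta w\ge 0$. The full two-derivative calculation, including the term $w^2\Delta w_{ee}$ that arises from differentiating the coefficient $w^2$ (namely $4ww_e\Delta w_e + 2w_e^2\Delta w$), is what must be organized. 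This gives the local structure: the minimal eigenvalue $\lambda(x,t)$ of $D^2w$ is a viscosity supersolution of an equation with a nonnegative reaction term at $\lambda=0$.

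\textbf{Existence and growth control.} The condition (1.3) gives a weak solution on $[0,T(u_0))$, via the result stated in Section 2. Since $v_0>c(|x|^2+1)>0$, the equation is uniformly parabolic on compact sets, so the solution is smooth and positive by standard quasilinear theory. I would compare $v$ from below with the explicit quadratic solution $\frac{c|x|^2}{1-c(2d+4/(m-1))t}+c(\cdots)$; this keeps $v\gtrsim |x|^2+1$ and hence keeps $w$ bounded away from $0$. The bound $\Delta v_0\le K$ should propagate to a bound on $\Delta v$: differentiating (1.2) shows that $L=\Delta v$ satisfies, at a maximum, $L_t\le vL\cdot 0+ L^2+\tfrac{2}{m-1}L^2+\dots\le \tfrac{m+1}{m-1}L^2$ (using $|D^2v|^2\ge L^2/d$ appropriately). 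Hence $\Delta v\le K/(1-\tfrac{m+1}{m-1}Kt)$, which is finite exactly for $t<(m-1)/((m+1)K)$. This explains the first term in $T$. The bound gives $|D^2 w|$ and $|\nabla w|$ growth control at infinity, for example $w\sim|x|$ and $|\nabla w|\lesssim 1$.

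\textbf{Maximum principle on $\mathbb{R}^d$ (main obstacle).} With the growth bounds in hand, apply the maximum principle on the noncompact domain to $\lambda$. Since the diffusion coefficient $w^2\sim|x|^2$ is unbounded, I would perturb: consider $\lambda+\varepsilon e^{Ct}\phi(x)$ with a barrier such as $\phi=\log(|x|^2+1)$ or $(|x|^2+1)^{-\delta}$ adjusted so that $\phi\to\infty$ is compatible with the $w^2\Delta\phi$ term. I would use the Laplacian bound to ensure the negative part of $\lambda$ cannot escape to infinity, suppose a first touching of $0$ occurs at an interior point, contradict it using the computation above, and let $\varepsilon\to0$. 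Making the barrier estimate work with quadratic-growth coefficients, together with justifying the needed decay of the lower-order terms at infinity, is the step I expect to be hardest.
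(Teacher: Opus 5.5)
\textbf{There is a genuine gap: your a priori estimates are circular.} Write $r=1/(m-1)$. The Laplacian $L=\Delta v$ satisfies $L_t=v\Delta L+(2r+2)\nabla v\cdot\nabla L+L^2+2r|D^2v|^2$. To get $L_t\le(2r+1)L^2=\frac{m+1}{m-1}L^2$ at a maximum you need $|D^2v|^2\le L^2$. That holds only when $D^2v\ge0$, so it presupposes the convexity you are trying to prove. The inequality $|D^2v|^2\ge L^2/d$ points the wrong way and gives nothing.

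Similarly, an upper bound on $\Delta v$ does not bound $D^2v$ from below. Nor does it bound $|\nabla v|^2/v$: adding to $c(1+|x|^2)$ smoothed Newtonian bumps, which are positive and superharmonic, keeps $\Delta v$ bounded above while making $|\nabla v|^2/v$ unbounded. So you do not get the control of $D^2w$ and $\nabla w$ at infinity that your third stage needs. That control is what keeps the negative part of $\lambda$ from escaping to infinity and bounds the $O(|\lambda|)$ error terms your barrier produces. These bounds do follow from root convexity plus $\Delta v\le C$, via $0\le D^2v\le C$ and $|\nabla v|^2/v\le 2\Delta v$, but that again assumes the conclusion. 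Nothing in your plan supplies the growth control of $\Delta v$ needed to run its own maximum principle on $\mathbb{R}^d$ either.

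\textbf{How the paper breaks the circle.} It proves short-time bounds that do not use convexity.
\begin{enumerate}
\item It solves from $v_0\phi_n+\varepsilon$. The cutoffs satisfy $|\nabla\phi_n|^2\le\phi_n/(|x|^2+1)$ and $|D^2\phi_n|\le C/(|x|^2+1)$, so the data are constant near infinity and the maximum principle applies.
\item Applying the maximum principle to $|\nabla v|^2-A(t)v$ and $|D^2v|^2-B(t)$, it bounds $|\nabla v|^2\le Av$ and $|D^2v|^2\le B$ on a fixed $[0,\tau]$, uniformly in $n$ and $\varepsilon$.
\item It passes to the limit.
\end{enumerate}
Together with $v\ge c'|x|^2$, these bounds give $D^2\sqrt v=\frac{1}{2\sqrt v}\bigl(v_{ij}-\frac{v_iv_j}{2v}\bigr)\to0$ at infinity. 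Hence a spatially constant perturbation suffices: the paper's $\psi(t)\sqrt v\,I$ is exactly $2w\bigl(D^2w+\frac{\psi}{2}I\bigr)$, and no growing barrier is needed. Only afterwards are root convexity and $\Delta v\le(1/K-(2r+1)t)^{-1}$ propagated together by a continuation argument, each feeding the other. That coupling, not an independent Laplacian estimate, is where $T=(m-1)/((m+1)K)$ comes from.

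\textbf{A smaller point on your first stage.} The relation $\nabla w_e=D^2w\,e=0$ does not remove $4ww_e\Delta w_e$, which is third order and has no sign. Take an eigenbasis of $D^2w$ with $e=e_1$ and eigenvalues $\lambda_1=\dots=\lambda_\mu=0<\lambda_{\mu+1}\le\dots$. The term is controlled only by two facts:
\begin{itemize}
\item $w_{1kk}=0$ for $k\le\mu$;
\item the refined second-order condition for the smallest eigenvalue, $\Delta w_{11}\ge 2\sum_k\sum_{m>\mu}w_{1mk}^2/\lambda_m$ (Lemma 5 of Brendle--Choi--Daskalopoulos).
\end{itemize}
Completing the square then gives $w_{11t}\ge\sum_{k>\mu}\frac{2}{\lambda_k}(w\,w_{1kk}+w_1\lambda_k)^2\ge0$. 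This uses up the entire $2w_1^2\Delta w$ term, so the reaction has no slack at $\lambda=0$.
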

	It is easy to find functions satisfying the assumptions on the initial data. For example, $(f+\sqrt{|x|^2+ \varepsilon})^2$ where $f$ is compactly supported in $B_R$ and
	\[
	D^2f>-\frac{\varepsilon}{(R^2+\varepsilon)^{3/2}}
	\quad\text{on }B_R.
	\]
	The solution of (1.2) starting with a quadratic function $a|x-b|^2+c \,\, (a>0, c>0)$ is clearly root convex throughout its interval of existence.
	\\
	
	The source of difficulty is that the maximum principle method in [15] requires the solution to be compactly supported, but in the case of growing initial data, the solution may not be compactly supported. To apply the maximum principle method, observe that 
	\[
	D^2_{ij}(\sqrt{v}) = \frac{1}{2\sqrt{v}}\left(v_{ij}-\frac{v_iv_j}{2v}\right).
	\]
	If $v \rightarrow \infty$ as $|x| \rightarrow \infty$ and $v_{ij}-v_iv_j/2v$ is bounded then $D^2_{ij} (\sqrt{v})V^iV^j \rightarrow 0$ for every $|V| = 1$ as $|x| \rightarrow \infty$. Therefore the problem is mostly reduced to finding bounds for $|D^2v|$ and $|Dv|^2/v$. We now give an outline of the proof of Main Theorem. In Section 3.1, we derive estimates for $v_0$ and $|Dv_0|^2$ and construct a sequence of smooth cutoff functions. These cutoff functions are then used to construct suitable approximations of the initial data. The solutions corresponding to the approximating initial data are constant for sufficiently large $|x|$. In Section 3.2, we apply maximum principle arguments to prove that $|D^2v|$ and $|Dv|^2/v$ are bounded for short time for the solutions starting at the approximating functions and then take limits. This allows us to prove that $D^2(\sqrt{v}) \ge 0$ for short time using the maximum principle method. In Section 3.3 we show that this short amount of time can be extended to a time depending only on $m$, $d$, $\sup_{\mathbb{R}^d} \Delta v_0$ and the maximal time for which the solution exists.
	\section*{AI Disclosure}
	The author used ChatGPT to check calculation and typos. 
	\section{Existence of solution}
	In this section we state the existence and uniqueness result in \cite{benilan_crandall_porous}. Define
	\[
	X = \{f \in L^1_{\text{loc}}(\mathbb{R}^d): f \ge 0 \,\,  \text{and} \,\,  \|f\|_r < \infty \,\, \text{for some }r \ge1\}.
	\]
	\begin{thme}
		Let $u_0 \in X$. Then, there exists a constant $c_1$ depending only on $m$ and $d$ and a unique solution that satisfies
		
		(i) $u\in C([0,T(u_0)); L^1_{\text{\text{loc}}}(\mathbb{R}^d)) \cap L^{\infty}_{\text{\text{loc}}}([0,T(u_0));X)$,
		
		(ii) $u_t = \Delta(u^m)$ in $\mathcal{D}'(\mathbb{R}^d \times [0,T(u_0)))$ (meaning that 
		\[
		\int\int_{\mathbb{R}^d\times (0,\infty)} u \phi_t \, dxdt= -\int\int_{\mathbb{R}^d\times (0,\infty)} u^m \Delta \phi \, dxdt
		\] for every test function $\phi \in C^{\infty}_c(\mathbb{R}^d \times (0,\infty))$),
		
		(iii) $u(0) = u_0$,
		
		where 
		\[
		T(u_0) = \frac{c_1}{\lim_{r \rightarrow \infty}\|u_0\|^{m-1}_r}.
		\]
	\end{thme}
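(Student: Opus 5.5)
Theorem 2 is the Bénilan--Crandall--Pierre existence and uniqueness theorem for data in $X$. It is quoted from \cite{benilan_crandall_porous} rather than proved here, so what follows only recalls how the proof there goes.

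\textbf{Weighted norms and approximation.} The central tool is the family of norms $\|f\|_r$ from (1.3), which are scaling-invariant for (1.1) with the exponent $\gamma = d+2/(m-1)$. One approximates $u_0$ by bounded, compactly supported data $u_{0,n} = \min(u_0,n)\chi_{B_n}$. For such data the classical theory of $L^1$ solutions to (1.1) gives global weak solutions $u_n$. Comparison yields monotonicity in $n$, and $L^1$-contraction gives the basic stability estimate.

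\textbf{Key a priori estimates.} The main step is to show that, uniformly in $n$,
\[
\|u_n(t)\|_r \le C\|u_{0}\|_r \quad\text{and}\quad \|u_n(t)\|_{L^\infty(B_R)} \le C\, t^{-d/(d(m-1)+2)}\, R^{2/(m-1)}\|u_0\|_r^{2/(d(m-1)+2)}
\]
for $t<c_1/\|u_0\|_r^{m-1}$. The first bound comes from testing the equation against a radial cutoff $\psi_R$, which gives
\[
\frac{d}{dt}\int u\psi_R \le \int u^m|\Delta\psi_R|.
\]
Interpolating the $u^m$ term with the $L^\infty$ bound produces a differential inequality of Bernoulli type for $\sup_R R^{-\gamma}\int_{B_R}u$. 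That inequality stays finite up to a time comparable to $\|u_0\|_r^{1-m}$, and this is what fixes $c_1$. The $L^\infty$ bound follows from the local $L^1$--$L^\infty$ smoothing effect, obtained either by comparison with Barenblatt solutions or via the Aronson--Bénilan inequality $\Delta v\ge -\frac{d}{(d(m-1)+2)t}$. Since only the tail matters, one may replace $r$ by large $r$ and let $r\to\infty$, which gives the stated $T(u_0)$.

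\textbf{Passage to the limit.} The local bounds give local equicontinuity (DiBenedetto-type Hölder continuity) and local $L^1$ compactness, so a subsequence of $u_n$ converges in $C([0,T);L^1_{\rm loc})$ to a solution satisfying (i)--(iii). For uniqueness one proves a weighted $L^1$-contraction
\[
\int |u-\hat u|(t)\,\phi \le \int|u_0-\hat u_0|\,\phi + \int_0^t\!\!\int |u^m-\hat u^m|\,|\Delta\phi| ,
\]
with rapidly decaying weights $\phi$, and closes it using the local $L^\infty$ bounds together with a Gronwall argument in the weighted norm. The main obstacle is the uniform weighted estimate with the sharp time $T(u_0)$, in particular controlling the nonlinear growth term $\int u^m|\Delta\psi_R|$.
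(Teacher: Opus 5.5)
Your existence argument follows the paper's sketch. You truncate to $u_{0,n}=\min(u_0,n)\chi_{B_n}$, get monotonicity in $n$ by comparison, combine the Aronson--B\'enilan inequality with a local $L^1$--$L^\infty$ estimate (the paper's Proposition 1) to get (2.2) uniformly in $n$ for $t<T_r(u_0)$, pass to the limit, and let $r\to\infty$. The paper simply imports (2.2) from Lemma 12.4 of V\'azquez's book. You instead reconstruct the step behind it, the control of $N(t)=\sup_{R>r}R^{-\gamma}\int_{B_R}u_n(t)$, and this is worth having because it shows where $T(u_0)$ comes from. Proposition 1 with $C\sim 1/t$ turns the cutoff identity into $N'\lesssim t^{-\lambda(m-1)}N^{1+q}$, where $\lambda=d/(d(m-1)+2)$ and $q=2\lambda(m-1)/d$; the extra $N^{m}$ term gives the same time scale. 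Since $1-\lambda(m-1)=2\lambda/d$, this integrates to $N(t)^{-q}\ge N(0)^{-q}-Ct^{2\lambda/d}$, which stays positive exactly for $t\lesssim N(0)^{1-m}$. Three cautions on this part:
\begin{itemize}
\item This step must use the $L^\infty$ bound in terms of $\|u_n(t)\|_r$, not the bound in terms of $\|u_0\|_r$ that it is meant to prove.
\item With the paper's normalization $v=mu^{m-1}$, the Aronson--B\'enilan constant is $(m-1)d/(d(m-1)+2)$; yours is the one for $v=\frac{m}{m-1}u^{m-1}$.
\item Since $(u_n)$ is monotone, no subsequence is needed. Continuity at $t=0$ has to come from a weighted $L^1$ estimate, not from interior H\"older bounds, which degenerate as $t\to0$.
\end{itemize}

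The genuine gap is in uniqueness, which the paper does not prove either (it is part of the citation). Your Gronwall step needs two things for \emph{both} solutions:
\begin{itemize}
\item the bound $\max(u,\hat u)^{m-1}\le Ct^{-\lambda(m-1)}(1+|x|^2)$, so that $|u^m-\hat u^m|\,|\Delta\phi|\le Ct^{-\lambda(m-1)}|u-\hat u|\,\phi$;
\item the weighted $L^1$-contraction itself, i.e.\ a Kato inequality for the pair.
\end{itemize}
For the constructed solution both are inherited from the bounded approximants $u_n$. A competitor $\hat u$ assumed only to satisfy (i)--(iii) is merely a distributional solution with weighted $L^1_{\mathrm{loc}}$ control. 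Nothing in that class gives it local boundedness, the Aronson--B\'enilan inequality, or a Kato inequality. As written, you therefore get uniqueness only among solutions that satisfy these bounds a priori, for instance limits of bounded approximations. For the stated class you need an additional ingredient: either a proof that every solution in the class obeys these bounds, or a uniqueness argument of a different kind (e.g.\ duality, in the spirit of Brezis--Crandall and Pierre). Note also that the weights cannot be merely ``rapidly decaying''. The Gronwall closes when $(1+|x|^2)|\Delta\phi|\lesssim\phi$, which holds for $\phi=(1+|x|^2)^{-k}$ (with $2k>\gamma$, so that $\int u\phi<\infty$) but fails for exponential or Gaussian weights.
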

	The existence result follows from the following estimate:
	\begin{proposition}
		Let $u \in L^{\infty}(\mathbb{R}^d)$ be nonnegative, $C \in (0,\infty)$ and 
		\[
		\Delta u^{m-1} \ge -C \quad  \text{in} \quad  \mathcal{D}'(\mathbb{R}^d)
		\]
		(meaning that 
		\[
		\int_{\mathbb{R}^d} u^{m-1} \Delta \phi \, dx \ge -\int_{\mathbb{R}^d} C\phi \, dx
		\] for every test function $\phi>0$, $\phi \in C^{\infty}_c(\mathbb{R}^d)$). Then there is a constant $K$ depending only on $d,m$ such that for $1 \le r \le R$ 
		\[
		\frac{1}{R^2}\|u\|^{m-1}_{L^\infty(B_R)} \le K(C^{\lambda (m-1)}\|u\|_r^{2\lambda(m-1)/d} + \|u\|_r^{m-1}).
		\tag{2.1}
		\]
		where $\lambda = d/(d(m-1)+2)$.
	\end{proposition}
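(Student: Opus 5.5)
This estimate is Bénilan–Crandall–Pierre's pointwise bound. The plan combines a mean-value inequality, coming from the semi-subharmonicity of $w := u^{m-1}$, with an interpolation between $L^\infty$ and the weighted $L^1$ norm $\|u\|_r$. Throughout, $w$ is bounded and nonnegative, and $\Delta w \ge -C$ holds in $\mathcal{D}'$.

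\textbf{Step 1 (mean value inequality).} Since $\Delta(w + C|x-x_0|^2/(2d)) \ge 0$, the function $w + C|x-x_0|^2/(2d)$ is subharmonic in the distributional sense. After mollifying, and passing to the limit at Lebesgue points, this gives for every $x_0$ and $\rho>0$
\[
w(x_0) \le \frac{1}{|B_\rho|}\int_{B_\rho(x_0)} w\,dx + \frac{C\rho^2}{2(d+2)}.
\]

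\textbf{Step 2 (pass from $w=u^{m-1}$ to $u$).} Fix $x_0\in B_R$ and set $M=\|u\|_{L^\infty(B_{2R})}$. On the ball we bound $w = u^{m-1} \le M^{m-2}u$ if $m\ge 2$. If $1<m<2$, we use Hölder/Jensen instead: the average of $u^{m-1}$ is at most $(\text{average of } u)^{m-1}$. In either case the average of $w$ over $B_\rho(x_0)$ is controlled by a power of $\rho^{-d}\int_{B_\rho(x_0)}u$. To control that integral by $\|u\|_r$, take $\rho\le R$, so that $B_\rho(x_0)\subset B_{2R}$. Then $\int_{B_\rho(x_0)}u\le \int_{B_{2R}}u\le (2R)^\gamma\|u\|_r$, where $\gamma=d+2/(m-1)$.

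This bound is lossy, since it does not use that the ball has radius $\rho$. A better version uses $\int_{B_\rho(x_0)}u \le \int_{B_{\max(\rho,r)}(x_0)}u$ together with the translation-invariance of the norm up to constants, namely $\sup_{R'\ge \rho}R'^{-\gamma}\int_{B_{R'}(y)}u\lesssim \|u\|_r$ for $|y|\le R$. I will aim for the bound
\[
w(x_0)\lesssim \rho^{2}\|u\|_r^{m-1}\Big(\tfrac{R}{\rho}\Big)^{\gamma(m-1)}\ \text{(crude)}\quad\text{or ideally}\quad w(x_0)\lesssim \big(\rho^{-d}\rho^{\gamma}\|u\|_r\big)^{m-1} + C\rho^2 .
\]
Since $\gamma(m-1)-d(m-1)=2$, the ideal version reads $w(x_0)\lesssim \rho^2\|u\|_r^{m-1}+C\rho^2$ in the regime $\rho\ge r$. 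This alone is not scale-correct, so the bound must be optimized differently.

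\textbf{Step 3 (optimize $\rho$).} The correct bookkeeping, as in Bénilan–Crandall–Pierre, balances the $L^1$ average over a small ball $\rho\ll R$ against the $C\rho^2$ term. Bound $\int_{B_\rho(x_0)} u \le \int_{B_{2R}}u\lesssim R^\gamma\|u\|_r$ and use the $m\ge2$ interpolation to get
\[
M_R^{m-1}\lesssim M_{2R}^{m-2}\rho^{-d}R^\gamma\|u\|_r + C\rho^2 .
\]
Choose $\rho$ to equate the two terms. Dividing by $R^2$ and writing $Y(R)=R^{-2}M_R^{m-1}$, this yields a recursive inequality $Y(R)\le \tfrac12 Y(2R)+K(C^{\lambda(m-1)}\|u\|_r^{2\lambda(m-1)/d}+\|u\|_r^{m-1})$. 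The exponent $\lambda=d/(d(m-1)+2)$ comes out of the balancing, and the constraint $\rho\le R$ produces the $\|u\|_r^{m-1}$ term. Iterating over dyadic scales, and using that $u\in L^\infty$ so that $Y(2^kR)\to 0$ and the geometric series converges, gives (2.1).

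\textbf{Main obstacle.} I expect the hardest part to be the absorption argument in Step 3, that is, making the self-referential bound close with the exact exponent $\lambda$. This means handling $M_{2R}$ on the right side and the case $1<m<2$. For that case I would work directly with $w$ and Jensen's inequality, which changes the algebra but should give the same scaling exponent.
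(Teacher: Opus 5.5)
The paper does not prove Proposition~1. It is quoted from B\'enilan--Crandall--Pierre and used only as a black box in the existence sketch, so there is no in-paper argument to compare yours with. Judged on its own, the argument you actually run in Step~3 is correct in outline. It consists of the mean value inequality for $w+C|x-x_0|^2/(2d)$, the bound $\int_{B_\rho(x_0)}u\le\int_{B_{2R}}u\le(2R)^\gamma\|u\|_r$ for $\rho\le R$, optimization over $\rho\in(0,R]$, and absorption of $\|u\|_{L^\infty(B_{2R})}$ using $u\in L^\infty$. Three points need fixing before it is a proof.

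First, the ``translation-invariance'' claim in Step~2 is false and should be deleted. For $|y|\le R$ and $R'\ll|y|$, the ball $B_{R'}(y)$ can carry mass comparable to $|y|^\gamma\|u\|_r$, not $R'^\gamma\|u\|_r$. One only has $\int_{B_{R'}(y)}u\le 2^\gamma R'^\gamma\|u\|_r$ for $R'\ge\max(|y|,r)$. The ``ideal'' bound built on that claim, taken with $\rho=r$, would give an $R$-independent $L^\infty$ bound. That is false: take $w$ a bump of height $H$ and width $\ell\gg r$ centred at distance $D\to\infty$. Conversely, the ``crude'' bound is not lossy at all. The identity $(\gamma-d)(m-1)=2$ is exactly what produces the factor $R^2$ in (2.1).

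Second, ``choose $\rho$ to equate the two terms'' does not by itself yield $\tfrac12 Y(2R)$. Balancing leaves $M_R^{m-1}\lesssim C^{d/(d+2)}\big(M_{2R}^{m-2}R^\gamma\|u\|_r\big)^{2/(d+2)}$, and you still need Young's inequality to split off a small multiple of $M_{2R}^{m-1}$. The cleanest route is to apply Young before optimizing: $M_{2R}^{m-2}X\le\epsilon M_{2R}^{m-1}+c_\epsilon X^{m-1}$ with $X=c\rho^{-d}R^\gamma\|u\|_r$. This gives
\[
M_R^{m-1}\le \epsilon M_{2R}^{m-1}+c_\epsilon\big(\rho^{-d}R^\gamma\|u\|_r\big)^{m-1}+cC\rho^2,\qquad 0<\rho\le R .
\]
Minimizing $A\rho^{-d(m-1)}+B\rho^2$ gives a value comparable to $A^{2\lambda/d}B^{\lambda(m-1)}$ when the minimizer is at most $R$. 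Using $2\gamma(m-1)\lambda/d=2$ and $1-2\lambda/d=\lambda(m-1)$, this equals $R^2C^{\lambda(m-1)}\|u\|_r^{2\lambda(m-1)/d}$ up to constants. When the minimizer exceeds $R$, take $\rho=R$. Then $C R^2$ is dominated by the first term, which equals $c_\epsilon R^2\|u\|_r^{m-1}$. With $\epsilon=1/8$ this is $Y(R)\le\frac12 Y(2R)+KF$ for every $R\ge r$. Absorption then follows from $\sup_{R'\ge R}Y(R')\le R^{-2}\|u\|_\infty^{m-1}<\infty$, which is equivalent to your dyadic iteration.

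Third, the case $1<m<2$ that you flag as an obstacle is actually the easy one. Jensen gives the displayed inequality with $\epsilon=0$, so (2.1) follows from the same one-shot optimization with no recursion. The absorption step, and hence the hypothesis $u\in L^\infty$, is only needed when $m>2$.
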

	Now we explain briefly how this estimate implies the existence of solution. Suppose that $u_0$ satisfies assumptions in Main Theorem.
	Define
	\[
	u_{0,n}(x) = 
	\begin{cases}
		\min\{u_0(x),n\} \quad \text{if} \quad |x| \le n\\
		0 \quad \text{if} \quad |x| > n
	\end{cases}
	\]
	By Chapter 9 of \cite{Vazquez} there exists a unique solution $u_n \ge 0$ (in the sense that $u_t = \Delta(u^m)$ in $\mathcal{D}'(\mathbb{R}^d \times (0,\infty))$ and $u(x,t) \rightarrow u_0$ as $t\rightarrow 0$ in $L^1$) to the problem
	\[
	\begin{cases}
		u_t = \Delta(u^m)\quad  \text{in}\quad \mathbb{R}^d\times (0,\infty),\\
		u(x,0) = u_{0,n}(x)\quad \text{in} \quad \mathbb{R}^d.
	\end{cases}
	\]
	The Aronson-Bénilan inequality \cite{AronsonBenilan1979} $\Delta u_{n}^{m-1} \ge -\sigma/t$ where $\sigma = m\alpha/(m-1)$ and Proposition 1 imply that 
	\[
	\frac{1}{R^2}\|u_n\|^{m-1}_{L^\infty(B_R)} \le K(t^{-\lambda (m-1)}\|u_n\|_r^{2\lambda(m-1)/d} + \|u_n\|_r^{m-1}).
	\]
	for every $t>0$. (Since $u_n(\cdot,t) \in L^{\infty}(\mathbb{R}^d)$ for every $t$, we can apply Proposition 1 to $u_n(\cdot, t)$.) By Lemma 12.4 in $[20]$ we have 
	\[
	R^{-2/(m-1)}\|u_n(t)\|_{L^\infty (B_R(0))} \le c t^{-\lambda }\|u_{0,n}\|_r^{2\lambda/d}
	\tag{2.2}
	\]
	\[
	\text{for} \quad 1\le r\le R, \quad 0<t<T_{r}(u_0) := c_1/\|u_0\|_r^{m-1}, 
	\]
	for some constants $c$, $c_1$ depending only on $m$ and $d$. 
	Since $u_{0,n}$ is increasing in $n$, by comparison principle, $u_n$ is increasing in $n$. By estimate $(1.3)$ and $(2.2)$, $u_n(t)$ is locally uniformly bounded for $0<t<T_r(u_0)$, so $\lim_{n\rightarrow \infty}u_n(x,t) := u(x,t)$ exists for $0<t<T_r(u_0)$. Therefore $u$ is a solution in the sense that 
	
	\[
	\int\int_{\mathbb{R}^d\times (0,T_r(u_0))} u \phi_t \, dxdt+ \int\int_{\mathbb{R}^d\times (0,T_r(u_0))} u^m \Delta \phi \, dxdt = -\int_{\mathbb{R}^d} u_0 \phi(x,0) \, dx
	\] for every test function $\phi \in C^{\infty}_c(\mathbb{R}^d \times (0,T_r(u_0)))$. Then, since $u>0$, by standard parabolic theory $u$ is a classical solution of $(1.1)$ on $\mathbb{R}^d \times [0,T_r(u_0))$ for every $r\ge1$. Finally let $r \rightarrow \infty$. 
	
	\section{Proof of Main Theorem}
	
	\subsection{Construction of smooth cutoff functions}
	In this section we construct a sequence of smooth cutoff functions $\phi_n$ which we will need later to approximate $v_0$. First we prove 
	the following estimates for $v_0$ and $|Dv_0|$. 
	\begin{lemma}
		Suppose that $v_0$ satisfies the assumptions of Main Theorem. Then there exists a constant $c$ depending only on $v_0, m$ and $d$ such that $v_0 \le c (1+|x|^2)$ and $|Dv_0| \le c(1+|x|)$.
		\begin{proof}
			By root convexity and boundedness of $\Delta v_0$ we have $0\le D^2v_0\le  K$  and $|Dv_0|^2/v_0 \le 2\Delta v_0 \le 2K$. Then by Taylor's formula
			\[
			v_0(x) \le v_0(0) + Dv_0(0)\cdot x \, + \frac{K}{2}|x|^2 \le c(1+|x|^2).
			\]
			Moreover $|Dv_0| \le \sqrt{2Kv_0} \le c(1+|x|)$. The constant $c$ may change from line to line.
		\end{proof}
	\end{lemma}

	\begin{proposition}
		There exists a sequence of smooth cutoff functions $\phi_n \in C^\infty_c(\mathbb{R}^d)$ that satisfy conditions
		
		(i) $\phi_n = 1$ for $|x| \le n$.
		
		(ii) $0<\phi_n < 1$ for $n<|x| < n+R(n)$ for some $R(n)$ depending on $n$.
		
		(iii) $\phi_n = 0$ for $|x|\ge n+R(n)$.
		
		(iv) $|D\phi_n|^2 \le \phi_n/(|x|^2+1)$.
		
		(v) $|D^2\phi_n| \le C/(|x|^2+1)$ for some $C$ independent of $n$.
		\begin{proof}
			Define
			\[
			\eta(t) = 
			\begin{cases}
				0 \quad \text{if} \quad  t \le 0 \\
				e^{-1/t} \quad \text{if} \quad t > 0,
			\end{cases}
			\]
			and
			\[
			H(s) = \frac{\eta(s)}{\eta(s)+\eta(1-s)}.
			\]
			Then $H = 0$ when $s \le 0$ and $H = 1$ when $s >1$, and $H$ is smooth. 
			Define
			\[
			\psi_n(x) = 
			\begin{cases}
				1 \quad \text{if} \quad |x| \le  \log((n^2+1)^{1/2}) \\
				H(( \log((n^2+1)^{1/2})+L-|x|)/L)^2  \quad \text{if} \quad  \log((n^2+1)^{1/2})<|x|\le  \log((n^2+1)^{1/2})+L\\
				0 \quad \text{if} \quad |x|> \log((n^2+1)^{1/2})+L
			\end{cases}                 
			\]
			where $L = 4 \sup_{s \in \mathbb{R}}|H'(s)|+1$. In $( \log((n^2+1)^{1/2}), \log((n^2+1)^{1/2})+L) \cup (- \log((n^2+1)^{1/2})-L,- \log((n^2+1)^{1/2}))$, 
			\[
			(\psi_n')^2(x)/\psi_n(x) = 4H'\left(\frac{\log((n^2+1)^{1/2})+L-x}{L}\right)^2/L^2< 1.
			\]
			Define $\phi_n=\psi_n(\log((|x|^2+1)^{1/2}))$. Then $\phi_n$ satisfy these conditions:
			
			\[
			|\nabla \phi_n|^2
			=
			\frac{|x|^2}
			{(1+|x|^2)^2}
			\left[
			\psi_n'\, \left(\log\left((1+|x|^2)^{1/2}\right)\right)
			\right]^2
			\]
			
			\[
			|\nabla \phi_n|^2/\phi_n
			=
			\frac{|x|^2}
			{(1+|x|^2)^2}
			\frac{\left[
				\psi_n'\!\left(\log\left((1+|x|^2)^{1/2}\right)\right)
				\right]^2}{\psi_n(\log((|x|^2+1)^{1/2}))}
			\]

			\[
			\begin{aligned}
				(\phi_n)_{ij}
				&=
				\frac{\psi_n''\!\left((1/2){\log(1+|x|^2)}\right)x_i x_j}
				{(1+|x|^2)^2}
				\\
				&\quad
				+\psi_n'\!\left((1/2)\log(1+|x|^2)\right)
				\left[
				\frac{\delta_{ij}}
				{(1+|x|^2)} - \frac{2x_ix_j}{(1+|x|^2)^2}\right].\\
			\end{aligned}
			\]
			Therefore
			\[
			|D\phi_n|^2/\phi_n \le 1/(|x|^2+1),
			\]
			and
			\[
			|D^2\phi_n| \le C/(|x|^2+1).
			\]
			for some $C$ independent of $n$ because $\sup_{x\in \mathbb{R}} |\psi_n'|$ and $\sup_{x\in \mathbb{R}} |\psi_n''|$ are both independent of $n$.
		\end{proof}
	\end{proposition}
	
	\subsection{Preservation of root convexity}
	Assume that 
	\[
	\sup_{x\in \mathbb{R}^d}|{v_0}_{ij}|^2 < \infty \quad \text{and}  \quad v_0\ge c(|x|^2+1),
	\]
	for some $c>0$.
	In this section we are going to show that there exists a $\tau>0$ depending on the initial data such that the matrix inequality
	\[
	D^2_{ij}(\sqrt{v}) \ge 0
	\]
	is preserved for $t \in [0,\tau]$.  \\
	
	First observe that if the initial data $v_0$ is bounded below by a quadratic function $c|x|^2$ then there exists a $c'>0$ such that the solution is bounded below by $c'|x|^2$ for all time.
	\begin{lemma}
		If $v_0 \ge c|x|^2$ for some $c>0$, then there exists a $c'>0$ such that $v(x,t) \ge c'|x|^2$  for every $(x,t) \in \mathbb{R}^d \times [0,\tau]$.
		\begin{proof}
			We compare the solution of the porous medium equation
			\[
			w(x,t) = \frac{|x|^2}{C-bt},
			\] 
			with $v(x,t)$,
			where 
			\[
			b = 2d + 4/(m-1),
			\]
			and $C$ is a positive number. Choose $C > b\tau$ and $C>1/c$. Then $w(x,0) \le v(x,0)$. By comparison principle, $v(x,t)\ge w(x,t) \ge |x|^2/C$.
		\end{proof}
		
	\end{lemma}
	
	Now we prove that  $|Dv|^2/v$ and $|D^2v|$ are bounded in $\mathbb{R}^d \times [0,\tau]$.
	Define $v_{0, n}^{(\varepsilon)} := v_0\phi_n+\varepsilon$ where $\phi_n $ are the smooth cutoff functions constructed in section 3.1.
	In the following two lemmas we denote the solution $v_{n}^{(\varepsilon)}$ starting at $v_{0, n}^{(\varepsilon)}$ temporarily by $v$.
	\vspace{\baselineskip}
	\begin{lemma}
		There exists $\tau'>0$ and $A>0$ such that
		\[
		(v_i)^2 - Av < 0
		\]
		in $\mathbb{R}^d \times [0,\tau']$.
		\begin{proof}
			Consider
			\[
			Q = (v_i)^2-A(t) v
			\]
			where $A(t)$ is a function of $t$ to be determined.
			At $t = 0$, 
			\begin{align*}
				( v_{0n})_i^2 
				&=(v_0\phi_n)_i^2 = ({v_0}_i)^2\phi_n^2 + 2({v_0})_i{(\phi_n)}_iv_0\phi_n + v_0^2({\phi_n}_i)^2\\
				&\le({v_0}_i)^2\phi_n + C_1v_0\phi_n + v_0^2\phi_n/(|x|^2+1)\\
				&\le C_2{v_0}\phi_n + C_1v_0\phi_n + C_3v_0\phi_n\\
				&< A(0)(v_0\phi_n+\varepsilon) = A(0)(v_{0n}+\varepsilon)
			\end{align*}
			for some $A(0)$ independent of $\varepsilon$ and $n$. The first inequality follows from the calculation of $\nabla \phi_n$ in section 3.1 and that $\phi_n$ can be chosen to satisfy $|\nabla \phi_n|^2 < \phi_n/(|x|^2+1)$. The second inequality follows from that ${v_0}_i^2<Cv_0$. As $|x| \rightarrow \infty$, $v_i^2 \rightarrow 0$ so $Q<0$. 
			Suppose $Q = 0$ for the first time at $(x_0,t_0)$. At this point, we have
			\begin{align*}
				Q_t &= vQ_{kk} + 2rv_kQ_k + 2v_i^2v_{kk} - 2vv^2_{ik} - A'v + rA^2v \\
				&\le 2A vv_{kk} - 2vv^2_{ik} - A'v + rA^2v\\
				&= 2v(A v_{kk}-v_{ik}^2)-A'v+rA^2v\\
				&\le 2v(Av_{kk}-(v_{kk})^2/d)-A'v+rA^2v\\
				&\le 2dA^2v+rA^2v - A'v  	 
			\end{align*}
			The last inequality is because if $Av_{kk}-(v_{kk}^2/d)>0$ then $v_{kk}<dA$.
			Then we can choose
			\[
			A' > (r+2d)A^2.
			\]
			or 
			\[
			A(t) = \frac{1}{1/{A(0)}-(r+2d+1)t}.
			\]
			Therefore choose $\tau' < 1/(A(0)(r+2d+1))$ and $A = A(\tau')$.
		\end{proof}
	\end{lemma} 
	\begin{lemma}
		There exists a $\tau>0$ and a constant $B>0$ such that
		\[
		|v_{ij}|^2 \le B . 
		\]
		at every $(x,t) \in \mathbb{R}^d\times [0,\tau]$.
		
		\begin{proof}
			
			Consider
			\[
			Q = v_{ij}^2 - B
			\]
			where $B(t)$ is a positive function of $t$ to be determined later. Suppose $Q = 0$ for the first time at $(x_0,t_0)$, $t_0 < \tau'$. At this point, 
			We have
			\begin{align*}
				0\le Q_t 
				&= vQ_{kk} + 2rv_kQ_k + 2v_{ij}^2v_{kk} - 2vv_{ijk}^2 \\
				&\quad
				+
				2v_{ij}v_{i}v_{kkj} + 2v_{ij}v_jv_{kki} + 4rv_{ij}v_{ki}v_{kj} - B'\\
				&\le vQ_{kk} + 2rv_kQ_k - 2vv_{ijk}^2 \\ &\quad +C|Dv||D^2v||D^3v| + C|D^2v|^3 - B'\\
				&\le 
				vQ_{kk} + 2rv_kQ_k - 2vv_{ijk}^2 \\ &\quad
				+ \frac{v}{2}|D^3v|^2 + C\frac{|Dv|^2}{v}|D^2v|^2 + C|D^2v|^3 -B'\\
				&\le  vQ_{kk} + 2rv_kQ_k + C\frac{|Dv|^2}{v}B + CB^{3/2}-B'			 
			\end{align*}
			where $C$ is a constant depending only on $m, d$ and $r = (m-1)^{-1}$. We want to choose $B$ such that
			\[
			B' > CB^{3/2} + C\frac{|Dv|^2}{v}B. 
			\]
			At the point $(x_0,t_0)$, since $t_0 \le \tau'$,
			\[
			0 \le C\frac{|Dv|^2}{v}B + CB^{3/2}-B' \le CAB + CB^{3/2} - B'.
			\]
			Choose $B$ that satisfies
			\[
			B' > CA(B + B^{3/2}).
			\]
			For example, 
			\[
			B(t) = \frac{1}{(a-CAt)^2}
			\]
			with $a$ chosen so that $1/a^2 > \sup_{x \in \mathbb{R}^d} |(v_{0,n}^{(\varepsilon)})_{ij}|^2$ for all $\varepsilon$ and $n$. 
			$\sup_{x \in \mathbb{R}^d} |(v_{0,n}^{(\varepsilon)})_{ij}|^2$ is bounded independent of $\varepsilon$ and $n$ because
			\[
			\begin{aligned}
				|D^2(\phi_n v_0)|^2 
				&= 
				\sum_{i,j}(({\phi_n})_{ij}v_0 + 2{\phi_n}_i{v_0}_j+{v_0}_{ij}\phi_n )^2 \\
				&\le \sum_{i,j}{v_0}_{ij}^2+ 2\sum_{i,j}{v_0}_{ij}(({\phi_n})_{ij}v_0 + 2{\phi_n}_i{v_0}_j) + \sum_{i,j} (({\phi_n})_{ij}v_0 + 2{\phi_n}_i{v_0}_j)^2\\
				&< \sum_{i,j}{v_0}_{ij}^2+C'|{v_0}_{ij}| + C'.
			\end{aligned}
			\]
			for some $C'$ independent of $\varepsilon$ and $n$ by Proposition 2. 
			Choose $\tau$ such that 
			\[
			\tau < \tau' \qquad \text{and} \qquad \tau < \frac{a}{CA}.
			\] 
			Now we have proved that
			\[
			|{v_{n}^{(\varepsilon)}}_{ij}|^2 < B
			\tag{3.1}
			\]
			for $t \in [0,\tau]$.
			with $B$ independent of $\varepsilon$ and $n$. Therefore by (3.1) and Lemma 3 we have
			  \[
			  -\sqrt{B'}
			  \le
			  \partial_t\log v_n^{(\varepsilon)}
			  =
			  \Delta v_n^{(\varepsilon)}
			  +r\frac{|Dv_n^{(\varepsilon)}|^2}{v_n^{(\varepsilon)}}
			  \le
			  \sqrt{B'}+rA.
			  \]
			Integrating, 
			  \[
			  e^{-\sqrt{B'}\,t}v_{0,n}^{(\varepsilon)}
			  \le v_n^{(\varepsilon)}(x,t)
			  \le
			  e^{(\sqrt{B'}+rA)t}v_{0,n}^{(\varepsilon)}
			  \]
			for every $(x,t) \in \mathbb{R}^d\times [0,\tau]$. For each fixed $(x,t)$, there exists $N$ such that $v_n^{(\varepsilon)}(x,t) \ge c > 0$ for every $n > N$ and $\varepsilon>0$. By standard local parabolic theory, we have $v_n^{(\varepsilon)} \rightarrow v$ smoothly as $\varepsilon \rightarrow 0$ and $n \rightarrow \infty$. Therefore, 
			\[
			|v_{ij}|^2 \le B.
			\]
			for $t \in [0,\tau]$.
		\end{proof}
	\end{lemma}
	Similarly we can also let $\varepsilon \rightarrow 0$ and $n \rightarrow \infty$ in Lemma 3: Since 
	$
	((v^{(\varepsilon)}_{n})_i)^2/v^{(\varepsilon)}_{n} < A
	$ for $A$ independent of $\varepsilon$ and $n$, we have $(v_i)^2/v \le A$ in $\mathbb{R}^d\times [0,\tau]$. 
	\vspace{\baselineskip}
	
	Now we want to show that the root convexity $D^2(\sqrt{v}) \ge 0$ is preserved for $t \in [0, \tau]$. We want to show that the smallest eigenvalue $\lambda_1$ of the symmetric matrix 
	\[
	A_{ij} = v_{ij}-\frac{v_iv_j}{2v}+\psi\sqrt{v}I_{ij},
	\]
	where $\psi(t)>0$ is to be determined later, 
	is positive at every $(x,t) \in \mathbb{R}^d \times [0,\tau]$. As $|x| \rightarrow \infty$, $\lambda_1  > 0$. Assume that there is an interior point $(x_0,t_0)$ where $\lambda_1 = 0$ for the first time. Let $\mu$ denote the multiplicity of $\lambda_1$ at $(x_0,t_0)$. Choose coordinates so that $A_{ij}$ is diagonal with eigenvalues $\lambda_i = A_{ii}$ at $(x_0,t_0)$. We want to compute the evolution of $\lambda_1(x,t)$, but this function is not smooth in general. Therefore we use a viscosity type argument:
	\begin{claim}
		Let $\varphi$ be a smooth function such that $0\le\varphi < \lambda_1$ in $\mathbb{R}^d\times [0,t_0)$ and $\varphi = \lambda_1$ at $p = (x_0,t_0)$. Then we have $(\varphi)_t \ge (A_{11})_t$.
		\begin{proof}
			Define $\gamma(s) = (x_0, t_0-s)$. and let $v(s)$ be a vector field along $\gamma$ such that $v(0) \in \text{span} \{e_1,...,e_{\mu} \}$ and $v'(0) \in \text{span} \{e_{\mu+1},...,e_d \}$. Then the function $s \mapsto \langle Av(s),v(s)\rangle - \varphi(\gamma(s))|v(s)|^2 $ is 0 as $s = 0$ and  $>0$ as $s>0$. Therefore
			\begin{align*}
				0 &\le \left. \frac{d}{ds}\left( \langle A(\gamma(s))v(s),v(s)\rangle - \varphi(\gamma(s))|v(s)|^2 \right)
				\right|_{s = 0}\\
				&= \langle (\left. DA_{ij}\right|_p\cdot\gamma'(0))v(0),v(0)\rangle + 2\langle A(\gamma(0))v'(0), v(0)\rangle - \left. \frac{d}{ds}\right|_{s=0}\varphi(\gamma(s))|v(0)|^2\\
				&\quad - \varphi(\gamma(s))\langle v'(0),v(0) \rangle\\
				&= -\langle \left.(A_{ij})_t\right|_pv(0),v(0)\rangle + \left. \varphi_t\right|_p|v(0)|^2
			\end{align*}
			Hence $\langle \left.(A_{ij})_t\right|_pv(0),v(0)\rangle \le \left. \varphi_t\right|_p|v(0)|^2$ for any $v(0) \in \text{span} \{e_1,...,e_{\mu}\}$. Let $v(0) = e_1$, we get $(\varphi)_t \ge (A_{11})_t$.
		\end{proof}
	\end{claim}
	Choose $\varphi = 0$, by Lemma 5 in \cite{BrendleChoiDaskalopoulos2017} and Claim 1, at $(x_0,t_0)$ we have 
	\[
	0 = (\varphi)_k = (A_{11})_k,\quad 0 = (\varphi)_t \ge (A_{11})_t,
	\]
	\[
	(A_{1m})_k = 0 \quad \text{for} \quad 1<m\le\mu,
	\]
	and
	\[
	0 = (\varphi)_{kk} \le (A_{11})_{kk} + 2\sum_{m>\mu}\frac{A^2_{m1k}}{\lambda_1-\lambda_m}.
	\]
	Hence
	\begin{align*}
		0  = (\varphi)_t - v(\varphi)_{kk}
		&\ge \left(v_{11}-\frac{v_1^2}{2v}+\psi \sqrt{v}\right)_t - v\left(v_{11}-\frac{v_1^2}{2v}+\psi \sqrt{v}\right)_{kk}\\
		&\quad-2\sum_{m>\mu}\frac{v}{\lambda_1-\lambda_m}\left[\left(v_{m1}-\frac{v_mv_1}{2v}\right)_k\right]^2.
		\tag{1}
	\end{align*}
	Compute
	\[
	v_{1t} = vv_{kk1}+v_1v_{kk}+2rv_kv_{k1}
	\]
	and
	\[
	v_{11t} = vv_{kk11} + 2v_1v_{kk1} + v_{11}v_{kk} + 2rv_kv_{k11} + 2rv_{k1}^2.
	\]
	Hence
	\[
	(v_{11})_t - v(v_{11})_{kk} = 2v_1v_{kk1} + v_{11}v_{kk} + 2rv_kv_{k11} + 2rv_{k1}^2.
	\tag{2}
	\]
	On the other hand,
	\[
	\left(\frac{v_1^2}{2v}\right)_t = v_1v_{kk1} + \frac{v_1^2v_{kk}}{2v} + \frac{2rv_1v_kv_{k1}}{v} - \frac{rv_1^2v_k^2}{2v^2}
	\]
	and
	\[
	\left(\frac{v_1^2}{2v}\right)_k = \frac{v_1v_{1k}}{v} - \frac{v_1^2v_k}{2v^2},
	\]
	
	\[
	\left(\frac{v_1^2}{2v}\right)_{kk} = \frac{v_{1k}^2}{v} + \frac{v_1v_{1kk}}{v} - \frac{2v_1v_kv_{1k}}{v^2} - \frac{v_1^2v_{kk}}{2v^2} + \frac{v_1^2v_k^2}{v^3}.
	\]
	Hence
	\[
	\left(\frac{v_1^2}{2v}\right)_t - v\left(\frac{v_1^2}{2v}\right)_{kk} = \frac{v_1^2v_{kk}}{v} + \frac{2(r+1)v_1v_kv_{1k}}{v} - \frac{(r+2)v_1^2v_k^2}{2v^2} - v_{1k}^2.
	\tag{3}
	\]
	Next, 
	\[
	\left(
	\sqrt{v}\,\psi
	\right)_t
	=
	\frac{vv_{kk}+rv_k^2}{2\sqrt{v}}\psi
	+
	\sqrt{v}\,\psi_t ,
	\]
	and
	\[
	v(\sqrt{v}\,\psi)_{kk}
	=
	\left(
	\frac{v_k\psi}{2\sqrt{v}}
	\right)_k
	=
	\left(\frac{\sqrt{v} v_{kk}}{2}-\frac{v_k^2}{4\sqrt{v}}\right)\psi.
	\]
	Hence
	\[
	(
	\sqrt{v}\,\psi
	)_t
	-
	v(\sqrt{v}\,\psi)_{kk}
	=
	\left(\frac{r}{2}+\frac{1}{4}\right)\frac{v_k^2}{\sqrt{v}}\psi + \sqrt{v}\psi_t
	\tag{4}
	\]
	Combining $(2)$, $(3)$ and $(4)$ we get
	\begin{align*}
		&\quad \left(v_{11}-\frac{v_1^2}{2v}+\psi \sqrt{v}\right)_t - v\left(v_{11}-\frac{v_1^2}{2v}+\psi \sqrt{v}\right)_{kk}\\
		&= 2v_1v_{kk1} + v_{11}v_{kk} + 2rv_kv_{k11} + (2r+1)v_{k1}^2 - \frac{v_1^2v_{kk}}{v} - \frac{2(r+1)v_1v_kv_{1k}}{v} + \frac{(2+r)v_1^2v_k^2}{2v^2}.\\
		&\quad + \left(\frac{r}{2}+\frac{1}{4}\right)\frac{v_k^2}{\sqrt{v}}\psi + \sqrt{v}\psi_t
	\end{align*}
	At $(x_0,t_0)$ we have $(\lambda_1)_k = 0$ so
	\[
	v_{11k} = \frac{v_1v_{1k}}{v} - \frac{v_1^2v_k}{2v^2} - \frac{\psi v_k}{2\sqrt{v}}
	\]
	Rewriting the terms involving $v_{11k}$ we get
	\begin{align*}
		&\quad \left(v_{11}-\frac{v_1^2}{2v}+\psi \sqrt{v}\right)_t - v\left(v_{11}-\frac{v_1^2}{2v}+\psi \sqrt{v}\right)_{kk}\\
		&= 2v_1\left[v_{kk1}-\frac{v_{kk}v_1}{2v}-\frac{v_kv_{k1}}{2v}+\frac{ v_k^2v_1}{2v^2}\right] + \frac{v_{kk}v_1^2}{v} + \frac{v_1v_kv_{k1}}{v} - \frac{v_k^2v_1^2}{v^2}\\
		&\quad + v_{11}v_{kk} + (2r+1)v_{k1}^2 - \frac{v_1^2v_{kk}}{v} - \frac{2v_1v_kv_{1k}}{v} + \frac{(2-r)v_1^2v_k^2}{2v^2}\\
		&\quad +   \left(\frac{r}{2}+\frac{1}{4}\right)\frac{v_k^2}{\sqrt{v}}\psi + \sqrt{v}\psi_t - r\frac{\psi v_k^2}{\sqrt{v}}\\
		&=
		2v_1\sum_{k>\mu}\left[v_{k1}-\frac{v_kv_1}{2v}\right]_k + v_{11}v_{kk} +(2r+1)v_{k1}^2 - \frac{v_1v_kv_{1k}}{v} - \frac{rv_1^2v_k^2}{2v^2} - r\frac{\psi v_k^2}{\sqrt{v}}\tag{5} \\
		&\quad +   \left(\frac{r}{2}+\frac{1}{4}\right)\frac{v_k^2}{\sqrt{v}}\psi -\frac{\psi v_1^2}{\sqrt{v}}+ \sqrt{v}\psi_t 
	\end{align*}
	In the second equality we used that $(A_{1k})_k = (v_{1k}-v_1v_k/(2v))_k = 0$ for $1<k\le \mu$. 
	At $(x_0,t_0)$, we have for $k = 1, \dots, d$, 
	\[
	v_{kk} = \frac{v_k^2}{2v} - \psi \sqrt{v}+ \lambda_k
	\]
	and for $k \neq 1$, 
	\[
	v_{1k} = \frac{v_kv_1}{2v}.
	\]
	Since $\lambda_1 = 0$, at $(x_0,t_0)$ we have 
	\[
	\psi \sqrt{v} = \frac{v_1^2}{2v}-v_{11}
	\]
	and
	\[
	\lambda_k = v_{kk}-\frac{v_k^2}{2v}-v_{11}+\frac{v_1^2}{2v}.
	\]
	Since $|D^2v|^2$ and $|Dv|^2/v$ are uniformly bounded for $(x,t) \in \mathbb{R}^d \times [0,\tau]$, $\sum_{k \ge 1} \lambda_k$ at $(x_0,t_0)$ is bounded by some constant not depending on $(x_0,t_0)$. \\
	Hence
	\begin{align*}
		v_{11}v_{kk} &= 	\left(\frac{v_1^2}{2v} - \psi \sqrt{v} +\lambda_1\right)\sum_{k = 1}^d\left(\frac{v_k^2}{2v} - \psi \sqrt{v} +\lambda_k\right)\\
		&\ge\left(\frac{v_1^2}{2v} +\lambda_1\right)\sum_{k = 1}^d\left(\frac{v_k^2}{2v}  +\lambda_k\right)
		-
		\psi \sqrt{v}\left(\frac{dv_1^2}{2v}+\sum_{k = 1}^d\left(\frac{v_k^2}{2v}+\lambda_k\right)\right)\\
		&=\sum_{k=1}^d \frac{v_1^2v_k^2}{4v^2} + \frac{v_1^2}{2v}\sum_{k=1}^d \lambda_k - \psi \sqrt{v}\left(\frac{dv_1^2}{2v}+\sum_{k = 1}^d\left(\frac{v_k^2}{2v}+\lambda_k\right)\right).
	\end{align*}
	For $k=1$, 
	\begin{align*}
		(2r+1)v_{11}^2 =  (2r+1)  \left(\frac{v_1^2}{2v} - \psi \sqrt{v} +\lambda_1\right)^2 &\ge (2r+1)\left(\frac{v_1^2}{2v}+\lambda_1\right)^2\\
		&\quad - 2(2r+1)\psi \sqrt{v} \left(\frac{v_1^2}{2v}+\lambda_1\right)
	\end{align*}
	For $k>1$, 
	\begin{align*}
		(2r+1)v_{1k}^2 = (2r+1)\frac{v_1^2v_k^2}{4v^2}
	\end{align*}
	For $k=1$, 
	\begin{align*}
		-\frac{v_1v_kv_{1k}}{v} = -\frac{v_1^2v_{11}}{v} = -\frac{v_1^2}{v} \left(\frac{v_1^2}{2v} - \psi \sqrt{v} +\lambda_1\right) &\ge -\frac{v_1^4}{2v^2}
		+ \frac{v_1^2}{2v}\psi \sqrt{v}\\
		&\ge -\frac{v_1^4}{2v^2}
	\end{align*}
	For $k>1$,
	\begin{align*}
		-\frac{v_1v_kv_{1k}}{v} = -\frac{v_1^2v_k^2}{2v^2}.
	\end{align*}
	Therefore
	\begin{align*}
		(5) &\ge  2v_1\sum_{k>\mu}\left[v_{k1}-\frac{v_kv_1}{2v}\right]_k +\sum_{k=1}^d \frac{v_1^2v_k^2}{4v^2} + \frac{v_1^2}{2v}\sum_{k=1}^d \lambda_k - \psi \sqrt{v}\left(\frac{dv_1^2}{2v}+\sum_{k = 1}^d\left(\frac{v_k^2}{2v}+\lambda_k\right)\right)\\
		&\quad + (2r+1)\left(\frac{v_1^2}{2v}+\lambda_1\right)^2 - 2(2r+1)\psi \sqrt{v}\left(\frac{v_1^2}{2v}+\lambda_1\right)\\
		&\quad + (2r+1)\sum_{k>1}\frac{v_1^2v_k^2}{4v^2} -\frac{v_1^4}{2v^2} - \sum_{k>1}\frac{v_1^2v_k^2}{2v^2} - r\frac{\psi v_k^2}{\sqrt{v}}\\
		&\quad - \sum_{k=1}^d
		\frac{rv_1^2v_k^2}{2v^2} -\frac{\psi v_1^2}{\sqrt{v}} +
		\left(\frac{r}{2}+\frac{1}{4}\right)\frac{v_k^2}{\sqrt{v}}\psi + \sqrt{v}\psi_t
	\end{align*}
	\begin{align*}
		&=
		2v_1\sum_{k>\mu}\left[v_{k1}-\frac{v_kv_1}{2v}\right]_k + \frac{v_1^2}{2v}\sum_{k=1}^d \lambda_k - 
		\psi \sqrt{v}\left(\frac{dv_1^2}{2v}+\sum_{k = 1}^d\left(\frac{v_k^2}{2v}+\lambda_k\right)\right) - r\frac{\psi v_k^2}{\sqrt{v}}\\
		&\quad- 
		2(2r+1)\psi \sqrt{v} \left(\frac{v_1^2}{2v}+\lambda_1\right)
		+
		\left( \frac{r}{2}+\frac{1}{4}\right)\frac{v_k^2}{\sqrt{v}}\psi -\frac{\psi v_1^2}{\sqrt{v}} + \sqrt{v}\psi_t \\
		&\ge
		\sum_{k>\mu}\frac{2v}{\lambda_1-\lambda_k}\left(
		\left[v_{k1}-\frac{v_kv_1}{2v}\right]_k
		\right)^2 + \frac{v_1^2}{2v}\sum_{k>1}(\lambda_1-\lambda_k)
		+\frac{v_1^2}{2v}\sum_{k=1}^d \lambda_k- r\frac{\psi v_k^2}{\sqrt{v}}\\
		&\quad - 
		\psi \sqrt{v}\left(\frac{nv_1^2}{2v}+\sum_{k = 1}^d\left(\frac{v_k^2}{2v}+\lambda_k\right)\right)\\
		&\quad- 
		2(2r+1)\psi \sqrt{v} \left(\frac{v_1^2}{2v}+\lambda_1\right) -\frac{\psi v_1^2}{\sqrt{v}}
		+ 
		\left( \frac{r}{2}+\frac{1}{4}\right)\frac{v_k^2}{\sqrt{v}}\psi + \sqrt{v}\psi_t
	\end{align*}
	
	Then there exists a $C$ depending only on $m, d$ and $v_0$ such that all terms involving $\psi$ are bounded below by $-C\psi \sqrt{v}$. By choosing $\psi$ such that $\psi_t>C\psi$, for example $\psi = \varepsilon e^{(C+1)t}$, and combining with $(1)$ we get a contradiction. Therefore
	\[
	A_{ij} = v_{ij}-\frac{v_iv_j}{2v}+\psi\sqrt{v}I_{ij} > 0
	\]
	or
	\[
	\frac{1}{\sqrt{v}}\left(v_{ij}-\frac{v_iv_j}{2v}\right)+\psi I_{ij}> 0
	\]
	Finally let $\varepsilon \rightarrow 0$, 
	\[
	\frac{1}{\sqrt{v}}\left(v_{ij}-\frac{v_iv_j}{2v}\right) > 0.
	\]

	\subsection{Proof of Main Theorem}
	
	We have proved that if initially 
	\[
	\sup_{x\in \mathbb{R}^d} \Delta v< \infty,
	\]
	then the root convexity is preserved for some time $\tau>0$. Now we prove Main Theorem.\\
	\begin{lemma}
		Let $T>0$. Suppose that $\Delta v$ is bounded for $t \in [0,T)$ and $v_0$ is root convex. Then $v$ is root convex for all $t \in [0,T]$.
		\begin{proof}
			Since $v_0$ is root convex and $\Delta v_0$ is bounded, $|{v_0}_{ij}|^2$ is bounded. By section 3.2 we can prove that $v$ is root convex for short time. Define
			\[
			\tau := \sup\left\{ \delta: v \, \text{is root convex for } t \in [0,\delta]  \right\}.
			\]
			If $\tau = T$ then we are done. If $\tau < T$, at $t = \tau$, $v$ is root convex and $\Delta v$ is bounded so again by section 3.2 we can prove that $v$ is root convex for $t \in [0,\tau+\tau']$. Contradiction. 
		\end{proof}
	\end{lemma}
	
	\begin{lemma}
		Suppose that $v_0$ is root convex, and 
		\[
		\sup_{x\in \mathbb{R}^d} \Delta v_0 = K < \infty. 
		\]
		Then
		\[
		\sup_{x\in \mathbb{R}^d} \Delta v (t) \le \frac{1}{1/K-(2r+1)t}
		\] 
		for
		\[
		t \in [0, \min \left\{ T_r, \frac{1}{(2r+1)K} \right\})
		\].
		
		\begin{proof}	
			Choose $K<K'<K+1$. Consider the quantity
			\[
			Q = v_{ii} - B(t) - \varepsilon t (|x|^2+1)
			\]
			We have proved that $|D^2v|^2$ is bounded for some short time $T$ and therefore $v$ is root convex for $t \in [0,T]$, so $\Delta v$ is bounded for $t \in [0,T]$. Then as $|x| \rightarrow \infty$, $Q<0$ for every $t >0$. $B(t)$ is a positive function of $t$ to be determined. Suppose $Q = 0$ for the first time at $(x_0,t_0)$. At this point, we have
			\begin{align*}
				0\le Q_t 
				&\le vQ_{kk} + (2r+2)v_kQ_k + v_{ii}^2 + 2rv_{ik}^2 + 4(r+1)\varepsilon t v_ix_i + 2d\varepsilon t v \\
				&\quad
				-B'-\varepsilon (|x|^2+1)\\
				&\le vQ_{kk} + (2r+2)v_kQ_k + (2r+1)v_{ii}^2 + 4(r+1)\varepsilon t v_ix_i + 2d\varepsilon t v \\
				&\quad
				-B'-\varepsilon (|x|^2+1) \quad (\text{by convexity}\,  \sum_{i,k=1}^d v_{ik}^2 \le \left(\sum_{i=1}^d v_{ii}\right)^2)\\
				&\le  (2r+1)(B+\varepsilon t (|x|^2+1))^2  \\ 
				&\quad + 4(r+1)\varepsilon t v_ix_i + 2d\varepsilon t v -B'-\varepsilon(|x|^2+1)\\
				&\le  (2r+1)B^2 - B'\\		 
			\end{align*}
			The last inequality is because we can choose $\delta < T$ small enough such that 
			\[
			t(\frac{4(r+1)v_ix_i}{|x|^2+1} + \frac{2dv}{|x|^2+1} + 2B(t)+t\varepsilon (|x|^2+1))<1
			\]
			for $t < \delta$,
			since at $(x_0,t_0)$, $t\varepsilon (|x|^2+1) < \Delta v$, and $\Delta v$ is bounded in $\mathbb{R}^d \times [0,\delta]$.  Define
			\[ 
			B(t) = \frac{1}{1/K' - (2r+1)t}.
			\]
			$\delta$ can be chosen independent of $\varepsilon$ and $K'-K$.
			Let $\varepsilon \rightarrow 0$. 
			Therefore
			\[
			\sup_{x\in \mathbb{R}^d} \Delta v(t) \le  \frac{1}{1/K' - (2r+1)t}.
			\]
			for every $t \in [0,\delta]$.  Since $K<K'<K+1$ is arbitrary, we have
			\[
			\sup_{x\in \mathbb{R}^d} \Delta v (t)\le  \frac{1}{1/K - (2r+1)t}.
			\]
			for every $t \in [0,\delta]$.\\
			Define
			\[
			\tau_{\max} := \sup\Bigl\{\delta: \sup_{x\in \mathbb{R}^d} \Delta v (t)\le  \frac{1}{1/K - (2r+1)t} \quad \text{for} \quad t \in [0,\delta] \Bigr\}
			\]
			By argument above $\tau_{\max}>0$. Suppose, by contradiction, that
			$\tau_{\max} < 1/(K(2r+1))$. 
			Then
			\[
			\sup_{x\in \mathbb{R}^d} \Delta v (t)\le  \frac{1}{1/K - (2r+1)\tau_{\max}} 
			\]
			for $t<\tau_{\max}$.
			Since $v$ is smooth we have
			\[
			\sup_{x\in \mathbb{R}^d} \Delta v(\tau_{\max}) \le  \frac{1}{1/K - (2r+1)\tau_{\max}}
			\] 
			and $v$ is root convex at $t = \tau_{\max}$ because $\sup_{x\in \mathbb{R}^d} \Delta v$ is bounded in $[0,\tau_{\max}]$ and root convexity is preserved as long as $\sup_{x\in \mathbb{R}^d} \Delta v$ is bounded.  Then 
			consider the quantity
			\[
			Q = v_{ii} - B(t) - \varepsilon (t-\tau_{\max}) (|x|^2+1).
			\]
			Therefore, there exists a $\tau'>0$ such that
			\[
			\sup_{x\in \mathbb{R}^d} \Delta v(t) \le  \frac{1}{1/K - (2r+1)t}.
			\]
			for $t \in [\tau_{\max}, \tau_{\max}+\tau']$. Contradiction. 
		\end{proof}
	\end{lemma}
	Then Main Theorem follows from Lemma 5 and Lemma 6, together with the existence result Theorem 2.

\end{document}